\documentclass[11pt]{elsarticle}
\usepackage{mathtools,amsmath,amssymb,amsthm}
\usepackage{mathrsfs}
\usepackage{graphicx}
\usepackage{tikz}
\usetikzlibrary{cd} 
\usetikzlibrary{positioning}
\usepackage{enumerate}
\usepackage{float}
\usepackage[colorlinks=true,linkcolor=blue,citecolor=blue,urlcolor=blue]{hyperref}
\usetikzlibrary{calc}

\theoremstyle{plain}
\newtheorem{theorem}{Theorem}[section]
\newtheorem{lemma}[theorem]{Lemma}
\newtheorem{proposition}[theorem]{Proposition}
\theoremstyle{definition}
\newtheorem{definition}[theorem]{Definition}
\newtheorem{remark}[theorem]{Remark}
\newtheorem{corollary}[theorem]{Corollary}
\newtheorem{example}[theorem]{Example}

\makeatletter
\newcommand{\defeq}{\coloneqq}

\newcommand{\cl}{\mathrm{cl}}

\newcommand{\Rmnum}[1]{\expandafter@slowromancap\romannumeral #1@}
\makeatother

\newcommand{\da}{{\downarrow}}
\newcommand{\ua}{{{\uparrow}}}
\makeatletter
\def\ps@pprintTitle{%
  \let\@oddhead\@empty
  \let\@evenhead\@empty
  \def\@oddfoot{\reset@font\hfil\thepage\hfil}
  \let\@evenfoot\@oddfoot
}\makeatother

\begin{document}
\begin{frontmatter}

\title{Coherence of Smyth powerspaces}
\tnotetext[t1]{Supported by NSF of China (No.12371457).}

\author{Rongqi Xiao}
\ead{rongqixiao2024@163.com}

\author{Xiaodong Jia\texorpdfstring{\corref{a1}}{}}
\ead{jiaxiaodong@hnu.edu.cn}
\cortext[a1]{Corresponding author.}
\address{School of Mathematics, Hunan University, Changsha, Hunan, 410082, China}

\begin{abstract}
In this paper, we study when the Smyth powerspace $\mathcal{Q}^*_v(X)$ of a topological space $X$ is coherent, and prove that $X$ is coherent and weakly Hausdorff if and only if $\mathcal{Q}^*_v(X)$ is coherent and weakly Hausdorff. We give examples to show that neither coherence nor weak Hausdorffness of $X$ solely implies that $\mathcal{Q}^*_v(X)$ is coherent or weakly Hausdorff. As a byproduct, our work gives an affirmative answer to a question raised by Xu~\cite[Question 7.6]{xu2026}.


\end{abstract}
\begin{keyword}
Smyth powerspace, coherence, weak Hausdorffness, Scott space. 
\end{keyword}

\end{frontmatter}

\section{Introduction}
The Smyth powerspace $\mathcal{Q}^*_v(X)$ of a topological space $X$ consists of all nonempty compact saturated subsets of~$X$ and equips with the upper Vietoris topology. It is one of the classical powerspace constructions in non-Hausdorff topology, and especially in domain theory that is used for denotational semantics of demonic nondeterminism in theoretical computer science~\cite{abramsky94, gierz03, goubault13a}. 

To serve as semantic models, one will need the Smyth powerspace be defined on the corresponding semantic category. This purpose has led to many research in this direction. That is, given a space $X$, one asks which properties of $X$ are preserved by the construction $\mathcal{Q}^*_v(X)$. 
Compactness of spaces is trivially in that preservation list. 
Goubault-Larrecq showed that if $X$ is stably compact, then $\mathcal{Q}^*_v(X)$ is stably compact~\cite{goubault2010-degrootduality}; Heckmann and Keimel showed that $X$ is sober if and only if $\mathcal{Q}^*_v(X)$ is sober \cite[Theorem 3.13]{HECKMANN2013215}; Xu, Xi and Zhao showed that $X$ is well-filtered if and only if that $\mathcal{Q}^*_v(X)$ is well-filtered~\cite{xuxizhao2021}; and Lyu, Chen and the second author showed that $X$ is locally compact if and only if that $\mathcal{Q}^*_v(X)$ is core-compact if and only if $\mathcal{Q}^*_v(X)$ is locally compact~\cite{LyuChenJia2022}. As stably compact spaces are compact, locally compact, sober and \textit{coherent} spaces, and motivated by these developments, we naturally ask whether coherence of a space can be preserved by the Smyth powerspace construction. The main goal of this paper is to answer this question.



A space $X$ is called to be \textit{coherent} if the intersection of arbitrary two compact saturated subsets of $X$ is again compact saturated. Coherence plays an important role in non-Hausdorff topology and domain theory, where it is used to characterize Lawson compactness \cite{gierz03, jia16a} and to locate Cartesian closed subcategories of domains \cite{jung89}.
Our first main result shows that the Smyth powerspace $\mathcal{Q}^*_v(X)$ of a coherent topological space $X$ is indeed coherent if  $X$ is additionally assumed to be \textit{weakly Hausdorff}. 
Moreover, we prove that $X$ is coherent and weakly Hausdorff if and only if its Smyth powerspace $\mathcal{Q}^*_v(X)$ is coherent and weakly Hausdorff (Theorem~\ref{thm:coherence-of-Qv}). The proof benefits from the weak Hausdorffness assumption, a concept introduced by Keimel and Lawson~\cite{keimel05} and further studied by Goubault-Larrecq~\cite{goubault22weakly}, providing a natural framework in which intersections of compact saturated sets behave well. 


This result has an immediate consequence in the Hausdorff case. Since Hausdorff spaces are weakly Hausdorff, coherent and well-filtered, we obtain that $\mathcal{Q}^*_v(X)$ is coherent whenever $X$ is Hausdorff. Combining this with the known preservation result for well-filteredness~\cite{xuxizhao2021} and Xu's criterion that coherent well-filtered spaces are strongly well-filtered~\cite{xu2026}, we obtain that $\mathcal{Q}^*_v(X)$ is strongly well-filtered for every Hausdorff space $X$, giving an affirmative answer to a question raised by Xu~\cite[Question 7.6]{xu2026}. 

Additionally, we give examples to show that neither coherence nor weak Hausdorffness of $X$ solely implies that $\mathcal{Q}^*_v(X)$ is coherent or weakly Hausdorff. Example~\ref{ex:X-weakly-Hausdorff-Q(X)not} gives a weakly Hausdorff space with a non weakly Hausdorff Smyth powerspace. 
In Section~4, we construct an explicit coherent $T_1$ space $X$ that is not weakly Hausdorff, and its Smyth powerspace $\mathcal{Q}^*_v(X)$ fails to be coherent. These two examples show that our main theorem (Theorem~\ref{thm:coherence-of-Qv}) cannot be sharpened further. 

Our results suggests that weak Hausdorffness and coherence is closely connected, and somewhat bonded together by the Smyth powerspace construction. 


\section{Preliminaries}
 
 The following definitions can be found in \cite{abramsky94, gierz03, goubault13a}.

Let $X$ be a topological space. We denote by $\mathcal{O}(X)$ the set of all open subsets of $X$, ordered by the inclusion order $\subseteq$. The \emph{specialization preorder} $\le_{\mathrm{spec}}$ on $X$ is defined by $x\le_{\mathrm{spec}} y$ if and only if $x\in \cl(\{y\})$, where $\cl(\{y\})$ is the closure of $\{y\}$. The relation $\le_{\mathrm{spec}}$ is a partial order if and only if $X$ is $T_0$. 
Intersections of open subsets of $X$ are called \emph{saturated} subsets. We say $X$ is \emph{coherent} if the intersection of any two compact saturated subsets of $X$ is compact saturated. A $T_0$ space $X$ is called \textit{well-filtered} if for any filtered family of compact saturated subsets $K_i, i\in I$ and open subset $U$, that $\bigcap_{i\in I}K_i \subseteq{U}$ holds implies that $K_i\subseteq{U}$ holds for some $i\in I$. 

Write $\mathcal{Q}(X)$ for the set of all compact saturated subsets of $X$.
The upper Vietoris topology $\nu$ on $\mathcal{Q}(X)$ is generated by the basic opens
\[
\square U \;:=\; \{K\in \mathcal{Q}(X): K\subseteq U\},
\]
where $U$ ranges over the open subsets of $X$. We write $\mathcal{Q}_v(X):=(\mathcal{Q}(X),\nu)$ as the \emph{Smyth powerspace} of $X$, and use $\mathcal{Q}^*_v(X)$ to denote the subspace of $\mathcal{Q}_v(X)$ that consists of \textit{nonempty} compact saturated subsets of $X$ and equipped with the subspace topology.  
Note that the specialization order on $\mathcal{Q}^*_v(X)$ (or on $\mathcal{Q}_v(X)$) is \emph{reverse inclusion}:
for $K,L\in \mathcal{Q}(X),  K \le_{\mathrm{spec}} L\iff L\subseteq K$.

\section{The weakly Hausdorff case}

Weak Hausdorffness is a topological property initially introduced by Keimel and Lawson in~\cite{keimel05}. 

\begin{definition}
    $X$ is \emph{weakly Hausdorff} if for all compact saturated subsets $Q_1,Q_2$ of $X$, and for every open neighborhood $W$ of $Q_1\cap Q_2$, there are two open neighborhoods $U$ of $Q_1$ and $V$ of $Q_2$ such that $U\cap V\subseteq W$.
\end{definition}

\begin{example}
    \begin{enumerate}
        \item All Hausdorff spaces are weakly Hausdorff, and in fact, a topological space $X$ is Hausdorff if and only if it is $T_1$ and weakly Hausdorff (see \cite{goubault22weakly}). 
        \item Every poset in its Alexandroff topology (the topology consisting of all upper sets) is weakly Hausdorff (see \cite{goubault22weakly}). 
        \item Coherent locally compact sober spaces are weakly Hausdorff (see \cite{keimel05}). 
    \end{enumerate}
\end{example}

\begin{remark}\label{remark-adding-top}
    For a topological space $X$, we add a distinguished point $\top$ to $X$, and equipped $X\cup\{\top\}$ with the topology consisting of $\varnothing$ and all $U\cup \{\top \}$, where $U$ are open sets in $X$. We denote the resulting space as $X^\top$. Since  $\{\top \}$ is open and $\top$ is the largest element in the specialization order of $X^\top$, each compact saturated subsets of $X^\top$ is either empty or of the form $K\cup \{\top\}$, where $K$ is compact saturated in $X$. 
    Then one could easily verify that:
    \begin{itemize}
        \item $X$ is coherent if and only if $X^\top$ is coherent;
        \item $X$ is weakly Hausdorff if and only if $X^\top$ is weakly Hausdorff. 
    \end{itemize}
Hence we have that $\mathcal{Q}^*_v(X)$ is coherent (reps., weakly Hausdorff) if and only if $\mathcal{Q}_v(X)$ is coherent (reps., weakly Hausdorff) for some topological space $X$. This is because $\mathcal{Q}_v(X)$ is homeomorphic to $(\mathcal{Q}^*_v(X))^\top$, with the empty compact saturated set in $\mathcal{Q}_v(X)$ being in correspondence to $\top$. 
\end{remark}

The switch between $\mathcal{Q}^*_v(X)$ and $\mathcal{Q}_v(X)$ enables us to talk about intersections of compact saturated sets without worrying about the intersection being empty.

\begin{lemma}[Continuity of intersection]\label{lem:cap-continuous}
Assume that $X$ is coherent and weakly Hausdorff. Then the map
\[
\cap \;:\; \mathcal{Q}_v(X)\times \mathcal{Q}_v(X) \longrightarrow \mathcal{Q}_v(X),\qquad (K,L)\longmapsto K\cap L
\]
is well-defined and continuous.
\end{lemma}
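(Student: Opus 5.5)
Well-definedness is immediate from coherence. The intersection of two compact saturated subsets of $X$ is compact saturated by hypothesis. This includes the case where one of them, or the intersection itself, is empty, since $\varnothing$ is trivially compact saturated. Hence $K\cap L\in\mathcal{Q}(X)$ for all $K,L\in\mathcal{Q}(X)$, and this is exactly why we work in $\mathcal{Q}_v(X)$ rather than $\mathcal{Q}^*_v(X)$.

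For continuity it suffices to check preimages of subbasic open sets. The sets $\square W$, with $W$ open in $X$, form a base of the upper Vietoris topology, because $\square U\cap\square V=\square(U\cap V)$. So I fix an open $W\subseteq X$ and a pair $(K,L)$ with $K\cap L\subseteq W$, i.e.\ $(K,L)\in\cap^{-1}(\square W)$. I need to find an open neighbourhood of $(K,L)$ in the product $\mathcal{Q}_v(X)\times\mathcal{Q}_v(X)$ that is contained in $\cap^{-1}(\square W)$.

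This is where weak Hausdorffness enters. Apply it to the compact saturated sets $Q_1=K$ and $Q_2=L$ together with the open neighbourhood $W$ of $K\cap L$. It yields open sets $U\supseteq K$ and $V\supseteq L$ with $U\cap V\subseteq W$. The set $\square U\times\square V$ is then an open neighbourhood of $(K,L)$ in the product topology. For any $(K',L')$ in it we have $K'\subseteq U$ and $L'\subseteq V$, hence $K'\cap L'\subseteq U\cap V\subseteq W$. That is, $K'\cap L'\in\square W$, so $\cap^{-1}(\square W)$ is open.

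There is no real obstacle. The only points to check are the degenerate cases, and they cause no trouble. If $K\cap L=\varnothing$, the definition of weak Hausdorffness still applies, since it is stated for all compact saturated $Q_1,Q_2$ and any open $W\supseteq Q_1\cap Q_2$. If $K$ or $L$ is empty, one may simply take $U=\varnothing$ or $V=\varnothing$. The substantive content is just the observation that weak Hausdorffness is precisely the separation property needed for continuity of $\cap$ with respect to the upper Vietoris topology.
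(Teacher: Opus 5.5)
Your proposal is correct and follows essentially the same route as the paper: well-definedness comes from coherence, and for continuity you apply weak Hausdorffness to $K$, $L$ and the open set $W\supseteq K\cap L$, which yields the basic neighbourhood $\square U\times\square V$ of $(K,L)$ contained in $\cap^{-1}(\square W)$. Your remarks on the empty cases are a harmless addition that the paper leaves implicit.
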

\begin{proof}
\emph{Well-definedness.} If $K,L\in \mathcal{Q}(X)$, then $K\cap L$ is saturated, as the intersection of saturated sets is always saturated. 
By coherence $K\cap L$ is compact. Hence $K\cap L\in \mathcal{Q}(X)$.

\emph{Continuity.} It suffices to show that the preimage of each basic open set $\square U\subseteq \mathcal{Q}_v(X)$ is open
in $\mathcal{Q}_v(X)\times \mathcal{Q}_v(X)$. For an open $U\subseteq X$, we have 
\[
\cap^{-1}(\square U)
=\{(K,L)\in \mathcal{Q}(X)\times \mathcal{Q}(X): K\cap L\subseteq U\}.
\]
Let $(K_0,L_0)$ lie in this set, i.e., $K_0\cap L_0\subseteq U$. By weak Hausdorffness, there exist opens $V,W\subseteq X$
such that $K_0\subseteq V$, $L_0\subseteq W$, and $V\cap W\subseteq U$. Then
$(K_0,L_0)\in \square V\times \square W$, and for any $(K,L)\in \square V\times \square W$ we have
$K\subseteq V$ and $L\subseteq W$, hence $K\cap L\subseteq V\cap W\subseteq U$, i.e., $(K,L)\in \cap^{-1}(\square U)$.
Thus$
(K_0,L_0)\in \square V\times \square W \subseteq \cap^{-1}(\square U)$;
and therefore,
$\cap^{-1}(\square U)$ is open and $\cap$ is continuous. 
\end{proof}

\begin{proposition}\label{prop:Qcoh/wh-Xcoh/wh}
Let $X$ be a topological space. 
\begin{enumerate}
    \item If $\mathcal{Q}_v(X)$ (or $\mathcal{Q}^*_v(X)$) is coherent, then $X$ is coherent. 
    \item If $\mathcal{Q}_v(X)$ (or $\mathcal{Q}^*_v(X)$) is weakly Hausdorff, then $X$ is weakly Hausdorff.  
\end{enumerate}
\end{proposition}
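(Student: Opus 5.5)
By Remark~\ref{remark-adding-top}, I may work with $\mathcal{Q}_v(X)$ throughout. The key tool is the correspondence between compact saturated subsets of $X$ and those of $\mathcal{Q}_v(X)$. For $K\in\mathcal{Q}(X)$, set
\[
\ua_{\mathcal{Q}} K \defeq \{L\in\mathcal{Q}(X): L\subseteq K\},
\]
which is the upper set of $K$ in the specialization order (reverse inclusion). I first check that $\ua_{\mathcal{Q}}K$ is compact saturated in $\mathcal{Q}_v(X)$. It is saturated because it is an upper set. It is compact because any open cover of it has a member containing $K$ itself. That member is open and hence upward closed, so it contains all of $\ua_{\mathcal{Q}}K$.

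Conversely, for a compact saturated $\mathcal{K}\subseteq\mathcal{Q}_v(X)$, set $\bigcup\mathcal{K}$. This is saturated in $X$, being a union of saturated sets. I will show it is compact, which is a standard fact. Take a directed open cover $(U_i)$ of $\bigcup\mathcal{K}$. Each $L\in\mathcal{K}$ is compact, so $L$ lies in some $U_i$, i.e.\ $L\in\square U_i$. The family $(\square U_i)$ is directed and covers $\mathcal{K}$, so by compactness of $\mathcal{K}$ a single $\square U_i$ covers it, whence $\bigcup\mathcal{K}\subseteq U_i$.

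For (1), take $K_1,K_2\in\mathcal{Q}(X)$. Then $\ua_{\mathcal{Q}}K_1\cap\ua_{\mathcal{Q}}K_2$ is compact saturated by coherence of $\mathcal{Q}_v(X)$. This intersection equals $\{L: L\subseteq K_1\cap K_2\}$, and its union is exactly $K_1\cap K_2$. One inclusion is clear. For the other, each $x\in K_1\cap K_2$ satisfies $\ua x\in\mathcal{Q}(X)$ and $\ua x\subseteq K_1\cap K_2$, since the $K_i$ are saturated. By the union fact, $K_1\cap K_2$ is compact, and it is saturated.

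For (2), let $W$ be an open neighborhood of $K_1\cap K_2$. Then $\ua_{\mathcal{Q}}K_1\cap\ua_{\mathcal{Q}}K_2\subseteq\square W$, since every $L$ in the intersection satisfies $L\subseteq K_1\cap K_2\subseteq W$. Weak Hausdorffness of $\mathcal{Q}_v(X)$ gives opens $\mathcal{U}\ni K_1$ and $\mathcal{V}\ni K_2$ with $\mathcal{U}\cap\mathcal{V}\subseteq\square W$. Shrinking to basic opens, I may take $K_1\in\square U\subseteq\mathcal{U}$ and $K_2\in\square V\subseteq\mathcal{V}$. For $x\in U\cap V$, the set $\ua x$ lies in $\square U\cap\square V\subseteq\square W$, so $x\in W$. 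Hence $U\cap V\subseteq W$ with $K_1\subseteq U$ and $K_2\subseteq V$.

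The main obstacle is the basic-open reduction in (2). The basic opens of the upper Vietoris topology are the finite intersections $\square U_1\cap\dots\cap\square U_n$. Since $\square U_1\cap\dots\cap\square U_n=\square(U_1\cap\dots\cap U_n)$, every basic open is a single $\square U$, so the reduction is harmless.
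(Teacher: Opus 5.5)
Your proof is correct. Part (2) follows the paper's argument. Both use the principal filters $\{L\in\mathcal{Q}(X): L\subseteq K_i\}$ and test membership of the points $\ua x$. The paper takes $U=\xi^{-1}(\mathcal U)$ for the continuous map $\xi(x)=\ua x$. You instead first shrink $\mathcal U$ to a basic open $\square U\ni K_1$, which amounts to the same thing since $\xi^{-1}(\square U)=U$.

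The genuine difference is in part (1). The paper gives no argument of its own; it invokes \cite[Theorem 3.39]{luli2021} and remarks that only coherence of $\mathcal{Q}_v(X)$ is used there. You supply a self-contained proof from three elementary facts:
\begin{enumerate}
\item principal filters are compact saturated in $\mathcal{Q}_v(X)$;
\item the union of a compact subset of $\mathcal{Q}_v(X)$ is compact in $X$, via directed covers and the directed family $(\square U_i)$;
\item $\bigcup(\ua_{\mathcal{Q}}K_1\cap\ua_{\mathcal{Q}}K_2)=K_1\cap K_2$, because each $\ua x$ with $x\in K_1\cap K_2$ belongs to this intersection.
\end{enumerate}
Your route makes the proposition independent of the literature. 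It also shows exactly which instance of coherence of $\mathcal{Q}_v(X)$ is needed, namely compactness of the intersection of two principal filters. This makes (1) and (2) visibly parallel, since both rest on the same two principal filters. The paper's route is simply shorter on the page, at the cost of deferring the coherence half to an external result.
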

\begin{proof}
1. This result is already shown in ~\cite[Theorem 3.39]{luli2021}; note that only the coherence of $\mathcal{Q}_v(X)$ is used there.

2. Assume that $\mathcal{Q}_v(X)$ is weakly Hausdorff. To prove $X$ is weakly Hausdorff, we pick $Q_1,Q_2\in \mathcal{Q}_v(X)$ and let $W\in \mathcal{O}(X)$ be such that  $Q_1\cap Q_2\subseteq W$. We define $\mathcal K_i:=\{K\in \mathcal{Q}_v(X):K\subseteq Q_i\}$ for $i=1,2$. Each $\mathcal K_i$, as a principal filter, is of course compact saturated in $\mathcal{Q}_v(X)$, and we have $\mathcal K_1\cap\mathcal K_2\subseteq \square W$. Since $\mathcal{Q}_v(X)$ is weakly Hausdorff, there exist open subsets $\mathcal U,\mathcal V\subseteq \mathcal{Q}_v(X)$ such that $\mathcal K_1\subseteq\mathcal U$, $\mathcal K_2\subseteq\mathcal V$, and $\mathcal U\cap\mathcal V\subseteq \square W$. Now consider the continuous map 
\[
\xi \;:\; X\longrightarrow \mathcal{Q}_v(X),\qquad x\longmapsto {\uparrow} x
\] 
and let $U:=\xi^{-1}(\mathcal U)$, $V:=\xi^{-1}(\mathcal V)$.  The sets $U$ and $V$ are open. Since $x\in Q_i$ implies ${\uparrow} x\subseteq Q_i$, i.e., $\xi(x)\in\mathcal K_i$, for $i=1,2$, we have $Q_1\subseteq U$ and $Q_2\subseteq V$. We conclude by showing that $U\cap V\subseteq{W}$. Indeed, if $x\in U\cap V$, then ${\uparrow} x=\xi(x)\in\mathcal U\cap\mathcal V\subseteq \square W$, so ${\uparrow} x\subseteq W$, and hence $x\in W$. 
\end{proof}

We arrive at the main result of this section. 

\begin{theorem}\label{thm:coherence-of-Qv}
Let $X$ be a topological space. The following are equivalent. 
\begin{enumerate}
    \item $X$ is coherent and weakly Hausdorff;
    \item $\mathcal{Q}_v(X)$ is coherent and weakly Hausdorff;
    \item $\mathcal{Q}^*_v(X)$ is coherent and weakly Hausdorff. 
\end{enumerate}
\end{theorem}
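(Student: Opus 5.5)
The plan is to prove $(2)\Leftrightarrow(3)$ directly from Remark~\ref{remark-adding-top}, obtain $(2)\Rightarrow(1)$ and $(3)\Rightarrow(1)$ from Proposition~\ref{prop:Qcoh/wh-Xcoh/wh}, and spend the real effort on $(1)\Rightarrow(2)$. For $(2)\Leftrightarrow(3)$: $\mathcal{Q}_v(X)\cong(\mathcal{Q}^*_v(X))^\top$, and adding a top preserves and reflects both properties.

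\textbf{Coherence of $\mathcal{Q}_v(X)$.} Assume $X$ is coherent and weakly Hausdorff, and let $\mathcal K_1,\mathcal K_2\subseteq\mathcal{Q}_v(X)$ be compact saturated. The idea is to realize $\mathcal K_1\cap\mathcal K_2$ as a continuous image of a compact set. Saturated sets in $\mathcal{Q}_v(X)$ are upper sets in the specialization order, which is reverse inclusion; so they are closed under taking smaller compact saturated subsets. I claim
\[\mathcal K_1\cap\mathcal K_2=\{K\cap L: K\in\mathcal K_1,\ L\in\mathcal K_2\}={\cap}(\mathcal K_1\times\mathcal K_2).\]
For ``$\subseteq$'', write $M=M\cap M$. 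For ``$\supseteq$'', note $K\cap L\subseteq K$ and $K\cap L\in\mathcal{Q}(X)$ by coherence of $X$; saturation of $\mathcal K_1$ then gives $K\cap L\in\mathcal K_1$, and similarly $K\cap L\in\mathcal K_2$. The product $\mathcal K_1\times\mathcal K_2$ is compact by Tychonoff for two factors. Lemma~\ref{lem:cap-continuous} says $\cap$ is continuous, so the image is compact. It is saturated as an intersection of saturated sets. Hence $\mathcal{Q}_v(X)$ is coherent.

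\textbf{Weak Hausdorffness of $\mathcal{Q}_v(X)$.} This is the main obstacle. Take $\mathcal K_1,\mathcal K_2$ as above and an open $\mathcal W\supseteq\mathcal K_1\cap\mathcal K_2$. Since $\cap(\mathcal K_1\times\mathcal K_2)=\mathcal K_1\cap\mathcal K_2\subseteq\mathcal W$, we have $\mathcal K_1\times\mathcal K_2\subseteq\cap^{-1}(\mathcal W)$, which is open by Lemma~\ref{lem:cap-continuous}. The plan is a tube-lemma argument (Wallace's theorem) for a product of two compact sets inside an open set. It yields opens $\mathcal U\supseteq\mathcal K_1$ and $\mathcal V\supseteq\mathcal K_2$ with $\mathcal U\times\mathcal V\subseteq\cap^{-1}(\mathcal W)$. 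Concretely, for each $(K,L)$ pick a basic box inside $\cap^{-1}(\mathcal W)$, use compactness of $\mathcal K_2$ for fixed $K$, then compactness of $\mathcal K_1$; Wallace's theorem needs no separation axioms.

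It remains to check $\mathcal U\cap\mathcal V\subseteq\mathcal W$. If $M\in\mathcal U\cap\mathcal V$, then $(M,M)\in\mathcal U\times\mathcal V$, so $M=M\cap M\in\mathcal W$. The key point to verify carefully is the set identity ${\cap}(\mathcal K_1\times\mathcal K_2)=\mathcal K_1\cap\mathcal K_2$, which uses saturation under reverse inclusion. Together with Remark~\ref{remark-adding-top}, this completes $(1)\Rightarrow(2)\Rightarrow(3)\Rightarrow(1)$.
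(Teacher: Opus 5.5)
Your proposal is correct and follows the paper's proof essentially step for step. You get $(2)\Leftrightarrow(3)$ from the adjoined-top remark and the implications into (1) from the proposition. For coherence you write $\mathcal K_1\cap\mathcal K_2=\cap(\mathcal K_1\times\mathcal K_2)$ as a continuous image of a compact set, and for weak Hausdorffness you use Wallace's tube lemma together with the diagonal pair $(M,M)$, exactly as the paper does.
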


\begin{proof}
Assume that $X$ is coherent and weakly Hausdorff. We show that $1$ implies $2$.
We first show that $\mathcal{Q}_v(X)$ is coherent. 
Let $\mathcal A,\mathcal B\subseteq \mathcal{Q}_v(X)$ be compact saturated subsets.
We must show that $\mathcal A\cap\mathcal B$ is compact in $\mathcal{Q}_v(X)$. Since $\mathcal A$ and $\mathcal B$ are compact, $\mathcal A\times \mathcal B$ is compact in $\mathcal{Q}_v(X)\times \mathcal{Q}_v(X)$.
By Lemma~\ref{lem:cap-continuous}, the image
$\cap(\mathcal A\times\mathcal B) = \{K\cap L : K\in \mathcal A, L\in \mathcal B \}$ is compact in $\mathcal{Q}_v(X)$. We claim that $\cap(\mathcal A\times\mathcal B)=\mathcal A\cap\mathcal B$.
Take $K\in \mathcal A$ and $L\in \mathcal B$. Since $K\cap L\subseteq K$ and
$K\cap L\subseteq L$, the specialization order on $\mathcal{Q}_v(X)$ (reverse inclusion) yields
$K \le_{\mathrm{spec}} (K\cap L)$ and 
$L \le_{\mathrm{spec}} (K\cap L)$. 
Because $\mathcal A$ and $\mathcal B$ are saturated, they are upward closed with respect to $\le_{\mathrm{spec}}$.
Hence $(K\cap L)\in \mathcal A$ and $(K\cap L)\in \mathcal B$, so $K\cap L\in \mathcal A\cap\mathcal B$.
Therefore $\cap(\mathcal A\times\mathcal B)\subseteq \mathcal A\cap\mathcal B$. Conversely, if $M\in \mathcal A\cap\mathcal B$, then $(M,M)\in\mathcal A\times\mathcal B$ and
$\cap(M,M)=M$. So $M\in \cap(\mathcal A\times\mathcal B)$, and hence $\mathcal A\cap\mathcal B\subseteq
\cap(\mathcal A\times\mathcal B)$.
Thus $\mathcal A\cap\mathcal B=\cap(\mathcal A\times\mathcal B)$, which is compact. Since the intersection of saturated
sets is saturated, $\mathcal A\cap\mathcal B$ is compact saturated. This shows that $\mathcal{Q}_v(X)$ is coherent.\\
To show that $\mathcal{Q}_v(X)$ is weakly Hausdorff, we let $\mathcal A,\mathcal B\subseteq \mathcal{Q}_v(X)$ be compact saturated and let $\mathcal W\in \mathcal{O}(\mathcal{Q}_v(X))$ satisfy $\mathcal A\cap\mathcal B\subseteq\mathcal W$. Then $\mathcal A\cap \mathcal B = \cap(\mathcal A\times  \mathcal B) \subseteq \mathcal W$, and hence   $\mathcal A\times  \mathcal B \subseteq\cap^{-1}(\mathcal W)$. Note also that  $\cap^{-1}(\mathcal W)$ is open in $\mathcal{Q}_v(X)\times \mathcal{Q}_v(X)$ as the $\cap$ operation is continuous by Lemma~\ref{lem:cap-continuous}. By Wallace's generalized tube lemma (see for example Exercise~9 on Page~171 in~\cite{munkres99}), there are open sets $\mathcal U,\mathcal V\subseteq \mathcal{Q}_v(X)$ such that $\mathcal A\subseteq\mathcal U$, $\mathcal B\subseteq\mathcal V$, and $\mathcal U\times\mathcal V\subseteq\cap^{-1}(\mathcal W)$. Moreover, if $M\in\mathcal U\cap\mathcal V$, then $(M,M)\in\mathcal U\times\mathcal V$, and $M=\cap(M,M)\in\mathcal W$; therefore, we have $\mathcal U\cap\mathcal V\subseteq\mathcal W$, and $\mathcal{Q}_v(X)$ is weakly Hausdorff.

That $2$ implies $1$ is shown in Proposition~\ref{prop:Qcoh/wh-Xcoh/wh}, and the equivalence between $2$ and $3$ is explained in Remark~\ref{remark-adding-top}. 
\end{proof}

According to Xu~\cite{xu2026}, a ($T_0$) space $X$ is called \textit{strongly well-filtered} if for any filtered family of compact saturated subsets $K_i, i\in I$, any compact saturated subset $K$, and open subset $U$, that $\bigcap_{i\in I}(K_i\cap K) \subseteq{U}$ holds implies that $K_i\cap K \subseteq{U}$ already holds for some $i\in I$. 
Xu asked in~\cite[Question 7.6]{xu2026} whether the space $\mathcal{Q}^*_v(X)$ is strongly well-filtered when $X$ is Hausdorff. 
The following result, which is a corollary to Theorem~\ref{thm:coherence-of-Qv}, provides an affirmative answer.

\begin{corollary}
If $X$ is Hausdorff, then the Smyth powerspace $\mathcal{Q}^*_v(X)$ is strongly well-filtered.
\end{corollary}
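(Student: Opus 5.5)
The plan is to assemble the corollary from three ingredients already indicated in the introduction: Theorem~\ref{thm:coherence-of-Qv}, the preservation of well-filteredness by the Smyth powerspace~\cite{xuxizhao2021}, and Xu's criterion that every coherent well-filtered space is strongly well-filtered~\cite{xu2026}. The argument reduces to checking the hypotheses of these results for a Hausdorff space $X$ and then chaining them.

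First, we observe that a Hausdorff space $X$ is weakly Hausdorff; this is item~1 of the Example following the definition. It is also coherent. In a Hausdorff space every compact set is closed and every subset is saturated. Hence for compact $K_1,K_2$, the set $K_1\cap K_2$ is a closed subset of the compact set $K_1$, and is therefore compact (and trivially saturated). Theorem~\ref{thm:coherence-of-Qv}, $1\Rightarrow 3$, then yields that $\mathcal{Q}^*_v(X)$ is coherent (and weakly Hausdorff).

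Second, we need $\mathcal{Q}^*_v(X)$ to be well-filtered. A Hausdorff space is sober, hence well-filtered. One can also check this directly: if $\{K_i\}$ is a filtered family of compact sets with $\bigcap_i K_i\subseteq U$, then the sets $K_i\setminus U$ are closed in a compact $K_{i_0}$, filtered, and have empty intersection, so some $K_i\setminus U$ is empty. By~\cite{xuxizhao2021}, $\mathcal{Q}^*_v(X)$ is then well-filtered; alternatively, by \cite[Theorem 3.13]{HECKMANN2013215} it is sober, hence well-filtered. The $T_0$ requirement in Xu's definition holds because the specialization order of $\mathcal{Q}^*_v(X)$ is reverse inclusion, which is antisymmetric.

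Finally, we apply Xu's criterion~\cite{xu2026} to the coherent well-filtered $T_0$ space $\mathcal{Q}^*_v(X)$ and conclude that it is strongly well-filtered. The only genuinely nontrivial step is the coherence of $\mathcal{Q}^*_v(X)$, which is exactly what Theorem~\ref{thm:coherence-of-Qv} supplies. All remaining steps are routine hypothesis checks. The one point needing care is to confirm that the form of Xu's criterion used matches the definition of strong well-filteredness stated above.
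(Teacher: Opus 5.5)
Your proposal is correct and follows the paper's own proof: Hausdorffness gives weak Hausdorffness, coherence and well-filteredness, so Theorem~\ref{thm:coherence-of-Qv} makes $\mathcal{Q}^*_v(X)$ coherent, \cite{xuxizhao2021} makes it well-filtered, and Xu's criterion \cite[Proposition 4.3]{xu2026} then yields strong well-filteredness. Your extra checks (coherence of $X$ via closedness of compact sets, well-filteredness of $X$, and $T_0$-ness of $\mathcal{Q}^*_v(X)$ via reverse inclusion) are all sound and just spell out what the paper states without proof.
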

\begin{proof}
Every Hausdorff space is weakly Hausdorff, well-filtered and coherent. Then $\mathcal{Q}^*_v(X)$ is well-filtered by~\cite[Theorem 4]{xuxizhao2021}.
Now Theorem~\ref{thm:coherence-of-Qv} tells us that  $\mathcal{Q}^*_v(X)$ is coherent, and by~\cite[Proposition 4.3]{xu2026} coherent well-filtered spaces are strongly well-filtered. 
\end{proof}

In the next example, we will see that without $X$ being coherent assumed,  $\mathcal{Q}_v(X)$, or equivalently, $\mathcal{Q}^*_v(X)$, could fail to be weakly Hausdorff even if $X$ is weakly Hausdorff. 

\begin{example}\label{ex:X-weakly-Hausdorff-Q(X)not}
    Let $C = \{c_0, c_1, c_2 \cdots\}$ be a countable set, with elements indexed by natural numbers, and $E= C \cup\{a,b\}$ be ordered by $a<c_n$ and $b<c_n$ for every $n\in\mathbb N$, with $a,b$ incomparable and the $c_n's$ pairwise incomparable, and equip $E$ with the Alexandroff topology. Let $I$ be the interval $[0,2]$ with the usual topology and put $X=E\sqcup I$, the disjoint topological sum of $E$ and $I$. Then $X$ is weakly Hausdorff but $\mathcal{Q}_v(X)$ is not weakly Hausdorff.
\medskip
\begin{figure}[H]
\begin{center}

\begin{tikzpicture}[x=1cm,y=1cm,line cap=round,line join=round]
  \def\figlw{0.8pt}
  \tikzset{
    edge/.style={line width=\figlw},
    pt/.style={circle,draw=black,fill=white,minimum size=5.5pt,inner sep=0pt,line width=\figlw},
    dot/.style={circle,fill=black,inner sep=0.7pt}
  }

  \coordinate (a) at (1.7,0);
  \coordinate (b) at (4.8,0);

  \node[pt] (aNode) at (a) {};
  \node[pt] (bNode) at (b) {};
  \node[below=5pt] at (aNode) {$a$};
  \node[below=5pt] at (bNode) {$b$};

  \foreach \x/\name in {0.0/n1,1.6/n2,3.2/n3,5.9/n4}{
    \node[pt] (\name) at (\x,2.05) {};
    \draw[edge] (\name) -- (aNode);
    \draw[edge] (\name) -- (bNode);
  }

  \node[dot] at (4.05,2.05) {};
  \node[dot] at (4.38,2.05) {};
  \node[dot] at (4.71,2.05) {};
  \node[dot] at (6.45,2.05) {};
  \node[dot] at (6.78,2.05) {};


  \coordinate (L) at (9,0.90);
  \coordinate (R) at (12.4,0.90);

  \draw[edge]
    (L) -- ($(L)+(0,-0.35)$)
        -- ($(R)+(0,-0.35)$)
        -- (R);

  \node[below=7pt] at ($(L)+(0,-0.35)$) {$0$};
  \node[below=7pt] at ($(R)+(0,-0.35)$) {$2$};
\end{tikzpicture}
\\Figure 1: $X$ in Example \ref{ex:X-weakly-Hausdorff-Q(X)not}
\end{center}
\end{figure}
\end{example}
\begin{proof}
    The space $X$ is weakly Hausdorff. Indeed, the topology on $E$ is the Alexandroff topology, so every compact saturated subset of $E$ is open, hence $E$ is trivially  weakly Hausdorff. The interval~$I$ is Hausdorff, hence weakly Hausdorff. Weak Hausdorffness is preserved by finite topological sums: Assume that $Q_i=A_i\sqcup K_i$ $(i=1,2)$ with $A_i\subseteq E$, $K_i\subseteq I$ are compact, and $W=W_E\sqcup W_I$ is open with $Q_1\cap Q_2\subseteq W$. As $E$ and $I$ are weakly Hausdorff, choose opens $U_E,V_E$ in $E$ with $A_1\subseteq U_E, A_2\subseteq{V_E}$ and $U_E\cap V_E\subseteq{W_E}$, and opens $U_I,V_I$ in $I$ with $K_1\subseteq{U_I}, K_2\subseteq{V_I}$ and $U_I \cap V_I\subseteq{W_I}$, then 
    $Q_1\subseteq U_E\sqcup U_I$, $Q_2\subseteq V_E\sqcup V_I$ and 
    $(U_E\sqcup U_I)\cap(V_E\sqcup V_I)\subseteq W$.

    We proceed to show that $Q_v(X)$ is not be weakly Hausdorff.
    Set $A={\uparrow}_E a=\{a\}\cup C$, $B={\uparrow}_E b=\{b\}\cup C$, $K=[0,1]$, $L=[1,2]$, $P=A\sqcup K$, and $Q=B\sqcup L$. Here $A$ and $B$ are principal upper sets in the Alexandroff space $E$, hence compact saturated; $K$ and $L$ are compact in $I$; therefore $P,Q\in \mathcal Q(X)$. For each $m\in\mathbb N$ let $F_m=\{c_0,\dots,c_m\}\subseteq C$ and $J_m=(1-\frac1{m+1},1+\frac1{m+1})\subseteq [0,2]$, and define $\mathcal W=\bigcup_{m\in\mathbb N}\Box(F_m\sqcup J_m)$, which is obviously open in $\mathcal Q_v(X)$.

    Note that ${\uparrow} P\cap{\uparrow} Q=\{M\in \mathcal Q_v(X):M\subseteq P\cap Q= C\sqcup\{1\}\}$. If $M\subseteq C \sqcup\{1\}$ is compact saturated, write $M=M_E\sqcup M_I$ with $M_E\subseteq C$ and $M_I\subseteq\{1\}$. As the singletons $\{c_n\}, n\in \mathbb N$ are open, the subspace topology on $C$ is discrete and then the compact $M_E$ must be finite. Hence $M\subseteq F_m\sqcup J_m$ for some~$m$, so $M\in\mathcal W$. Thus, $\mathcal W$ is an open neighborhood of ${\uparrow} P\cap{\uparrow} Q$.

    Now take arbitrary basic neighborhoods $\Box O_1$ of $P$ and $\Box O_2$ of $Q$, and we claim that $\Box O_1 \cap \Box O_2$ cannot be contained in $\mathcal M$. For this, we write $O_1=U\sqcup U'$ and $O_2=V\sqcup V'$. We have $A\subseteq U\subseteq E$, $K\subseteq U'\subseteq I$, $B\subseteq V\subseteq E$, and $L\subseteq V'\subseteq I$. Then $C = A\cap B \subseteq U\cap V$, and $U'\cap V'$ is an open neighborhood of $1$ in $I$; choose $t\in(U'\cap V')\setminus\{1\}$, and then choose an $m$ with $t\notin J_m$. Put $M=F_m\sqcup\{1,t\}$. Then $M\in \mathcal Q(X)$ and $M\subseteq O_1\cap O_2$, i.e., $M\in\Box O_1\cap\Box O_2$. However $M\notin\mathcal W$. For otherwise $M\subseteq F_k\sqcup J_k$ for some $k$, and we would have that  $F_m\subseteq F_k$. So we know that $k\ge m$; and therefore, $J_k\subseteq J_m$. But this would imply that $t\in J_k\subseteq J_m$, in contradiction to the choice of $m$.

    As every pair of basic neighborhoods of $P$ and $Q$ meet outside $\mathcal W$, every pair of neighborhoods of $P$ and $Q$ meet outside $\mathcal W$ too, and hence $\mathcal Q_v(X)$ cannot be weakly Hausdorff.
\end{proof}

\section{A counterexample without the weakly Hausdorff assumption}
 
In this section, we give a coherent topological space $X$, but its Smyth powerspace  $\mathcal{Q}^*_v(X)$ fails to be coherent. Hence 
this example shows that the weak Hausdorffness assumption in Theorem~\ref{thm:coherence-of-Qv} cannot be removed.

Let
\[
\mathbb N=\{0,1,2,\dots\},\qquad
X^+=\{n^+:n\in\mathbb N\},\qquad
X^-=\{n^-:n\in\mathbb N\},
\]
and put
\[
X=X^+\sqcup X^-.
\]
We define a topology $\tau$ on $X$ by declaring that a subset $U\subseteq X$ is open if either
\begin{itemize}
    \item $U=\varnothing$, or
    \item $X\setminus U$ is finite, or
    \item $U\subseteq X^+$ and $X^+\setminus U$ is finite.
\end{itemize}

One could easily verify that $\tau$ is a topology, and $X$ satisfies the following properties. Throughout this section, $X$ is always fixed as this topological space. 

\begin{remark}
    The construction of topological space $X$ is analogous to the space in \cite[Example~4.1]{goubault24a}, where that space was constructed to show  that $\omega$-projective limits of coherent spaces need not be coherent. The two spaces share a same underlying set, but are actually \textit{not} homeomorphic. Indeed, the next proposition shows that our space $X$ is coherent, while the space  in~\cite[Example~4.1]{goubault24a} is designed to fail coherence. 
\end{remark}

\begin{proposition}\label{prop:X-coherent}
The space $X$ is $T_1$, second-countable, every subset of $X$ is compact, and hence $X$ is compact, locally compact and coherent.
\end{proposition}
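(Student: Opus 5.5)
The plan is to verify each property directly from the three clauses defining $\tau$, proving "every subset is compact" first, since compactness, local compactness and coherence then follow immediately.

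\emph{$T_1$.} For $x\in X$, the complement $X\setminus\{x\}$ is cofinite, hence open. So every singleton is closed. In particular the specialization order is equality, and every subset of $X$ is saturated.

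\emph{Second countability.} The topology $\tau$ is countable, since $X$ is countable and $\tau$ consists of $\varnothing$, the cofinite subsets of $X$, and the subsets of $X^+$ that are cofinite in $X^+$. There are only countably many of each. A countable topology is trivially its own countable base.

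\emph{Every subset is compact.} Let $S\subseteq X$ be nonempty and let $\mathcal U$ be an open cover of $S$. Each nonempty member of $\tau$ is cofinite either in $X$ or in $X^+$.
\begin{itemize}
\item If some $U\in\mathcal U$ with $U\cap S\ne\varnothing$ is cofinite in $X$, then $S\setminus U$ is finite. It is covered by finitely many further members of $\mathcal U$, which together with $U$ give a finite subcover.
\item Otherwise every member of $\mathcal U$ meeting $S$ lies in $X^+$, so $S\subseteq X^+$. Any single $U\in\mathcal U$ meeting $S$ misses only finitely many points of $X^+$, and hence only finitely many points of $S$. We again finish with finitely many additional members.
\end{itemize}
This case analysis is the only step that needs any care. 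The subtle point is sets $S$ that meet $X^-$: such an $S$ cannot be covered by opens contained in $X^+$, so it forces an open of the cofinite kind to appear in the cover.

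\emph{Consequences.} Taking $S=X$ shows that $X$ is compact. Every point $x$ has the compact neighborhood $X$ inside any open $U\ni x$ in the following sense: $x\in U\subseteq U$, and $U$ itself is compact by the previous step. So every open neighborhood of $x$ is a compact open neighborhood, and $X$ is locally compact. Finally, for compact saturated $K_1,K_2$, the intersection $K_1\cap K_2$ is saturated because every subset is saturated, and compact because every subset is compact. Hence $X$ is coherent.
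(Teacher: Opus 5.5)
Your proof is correct and follows the paper's argument essentially step for step. You prove $T_1$ via cofinite complements of singletons and second countability via a countable base: you observe the whole topology is countable, while the paper lists the same sets explicitly as a base. Your two-case compactness argument is equivalent to the paper's split on whether the subset meets $X^-$. The only blemish is the phrase ``the compact neighborhood $X$ inside any open $U\ni x$''; it should simply say that every open $U\ni x$ is itself a compact neighborhood of $x$, as your next sentence correctly states.
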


\begin{proof}
For each $x\in X$, the complement $X\setminus\{x\}$ is cofinite in $X$, hence open; and therefore, $X$ is $T_1$.  Moreover, the family $\mathcal{B}:=\{X\setminus F : F\subseteq X \text{ is finite}\}\cup
\{X^+\setminus F : F\subseteq X \text{ is finite}\}$
is a countable base for $X$. So $X$ is second-countable.

Now assume $A\subseteq X$ is nonempty and we prove that $A$ is compact. For this we let $\mathcal U$ be an open cover of $A$. 

If $A\cap X^-\neq\varnothing$, choose $a\in A\cap X^-$. There exists $U\in\mathcal U$ with $a\in U$. Since $a$ is a negative point, $U$ must be of the second type of opens; hence $U$ is cofinite in $X$. Therefore $A\setminus U$ is finite. For each point of $A\setminus U$, choose one member of $\mathcal U$ to cover it, and we obtain a finite subcover of~$A$. 

If $A\subseteq X^+$, choose $a\in A$, and take $U\in\mathcal U$ with $a\in U$. Then $U$ is either cofinite in $X$ or cofinite in $X^+$. In either case, $A\setminus U$ is finite, and again $\mathcal A$ has a finite subcover of $A$. 

Thus, every subset of $X$ is compact. Hence $X$ is compact, locally compact and coherent.
\end{proof}

\begin{proposition}\label{prop:not-weh}
The space $X$ is not weakly Hausdorff.
\end{proposition}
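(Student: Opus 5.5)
The quickest route goes through the fact recorded in the Example following the definition of weak Hausdorffness: a space is Hausdorff if and only if it is $T_1$ and weakly Hausdorff. By Proposition~\ref{prop:X-coherent}, $X$ is $T_1$. So it suffices to show that $X$ is not Hausdorff; weak Hausdorffness would then force Hausdorffness, which is a contradiction.

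To see that $X$ fails to be Hausdorff, take two distinct negative points, say $0^-$ and $1^-$. Any open set containing a negative point cannot be of the third type, since those lie inside $X^+$. It is nonempty, so it is cofinite in $X$. Two cofinite subsets of the infinite set $X$ always intersect, so $0^-$ and $1^-$ cannot be separated by disjoint open sets.

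For a self-contained argument that avoids citing \cite{goubault22weakly}, I would instead verify the definition directly. Since $X$ is $T_1$, every subset is saturated, and by Proposition~\ref{prop:X-coherent} every subset is compact. Take $Q_1=\{0^-\}$, $Q_2=\{1^-\}$ and $W=\varnothing$, which is an open neighborhood of $Q_1\cap Q_2=\varnothing$. Any open neighborhoods $U\ni 0^-$ and $V\ni 1^-$ are cofinite in $X$, so $U\cap V$ is cofinite and nonempty. Hence $U\cap V\not\subseteq W$, and weak Hausdorffness fails.

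There is no real obstacle here. The only point to check carefully is that open sets containing negative points must be cofinite in all of $X$, and this is immediate from the definition of $\tau$.
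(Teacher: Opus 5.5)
Your proof is correct. It uses the same key fact as the paper, namely that every open set containing a negative point is cofinite in $X$, but it applies that fact to different witnesses.

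The paper takes $Q_1=X^+\cup\{(2n+1)^-:n\in\mathbb N\}$ and $Q_2=X^+\cup\{(2n)^-:n\in\mathbb N\}$. Their intersection $X^+$ is nonempty and itself open, so the paper sets $W=X^+$. Any open $U\supseteq Q_1$ and $V\supseteq Q_2$ are cofinite, so $U\cap V$ contains negative points and is not contained in $W$.

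You instead take the singletons $\{0^-\}$ and $\{1^-\}$, whose intersection is empty, with $W=\varnothing$. This amounts to showing that $X$ is not Hausdorff. You then either cite the fact that $T_1$ plus weakly Hausdorff implies Hausdorff, or, in your self-contained version, reprove the easy half of it directly.

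Your route is shorter and gives the slightly stronger conclusion that $X$ is a compact $T_1$ space that is not Hausdorff, so weak Hausdorffness already fails at the level of points. The paper's route shows a failure where the intersection is nonempty and open, not just the degenerate $W=\varnothing$ case. Its splitting of $X^-$ into odd and even points also anticipates the families $\mathcal A$ and $\mathcal B$ used later to show that $\mathcal{Q}^*_v(X)$ is not coherent.
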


\begin{proof}
Let
\[
Q_1:=X^+\cup\{(2n+1)^-:n\in\mathbb N\},
\qquad
Q_2:=X^+\cup\{(2n)^-:n\in\mathbb N\}.
\]
By Proposition~\ref{prop:X-coherent}, $Q_1$ and $Q_2$ are compact saturated, and their intersection $Q_1\cap Q_2=X^+$ is open in $X$.

Let $U\supseteq Q_1$ and $V\supseteq Q_2$ be any two open sets. Since $Q_1$ and $Q_2$ both contain negative points, neither $U$ nor $V$ can be of the third type opens. Hence both $U$ and $V$ are cofinite in $X$, and so is $U\cap V$. In particular, $U\cap V$ contains negative points, which means $U\cap V\nsubseteq X^+$, and this fails weak Hausdorffness. 
\end{proof}

Since every nonempty subset of $X$ is compact saturated, we have $\mathcal{Q}^{\ast}(X)=2^X\setminus\{\varnothing\}$. For $x\in X$, we define
$\mathcal U_x:=\{K\in \mathcal{Q}^{\ast}(X):x\notin K\}$.

\begin{lemma}\label{lem:subbasis}
The family
$\{\mathcal U_x:x\in X\}\cup\{\Box X^+\}$
is a subbase of the upper Vietoris topology on~$\mathcal{Q}^{\ast}(X)$.
\end{lemma}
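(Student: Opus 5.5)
We argue directly from the definition: the upper Vietoris topology on $\mathcal{Q}^{\ast}(X)$ has the sets $\Box U$, $U\in\tau$, as a base, and we compare it with the topology generated by $\{\mathcal U_x:x\in X\}\cup\{\Box X^+\}$. The proof has two halves. First, every proposed subbasic set is Vietoris open. Second, every basic Vietoris open set $\Box U$ is a finite intersection of proposed subbasic sets.

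For the first half, fix $x\in X$. The set $X\setminus\{x\}$ is cofinite, hence open, and $K\subseteq X\setminus\{x\}$ holds exactly when $x\notin K$. So $\mathcal U_x=\Box(X\setminus\{x\})$. Also $X^+$ is open, since it is cofinite in $X^+$ and of the third type. Hence $\Box X^+$ is a basic open set as well.

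For the second half, we split by the three types of open sets in the definition of $\tau$.
\begin{itemize}
\item If $U=\varnothing$, then $\Box U=\varnothing$ in $\mathcal{Q}^{\ast}(X)$, because all members are nonempty. This is the empty union, so it lies in the generated topology.
\item If $X\setminus U=F$ is finite, then $K\subseteq U$ iff $K\cap F=\varnothing$. Hence $\Box U=\bigcap_{x\in F}\mathcal U_x$, a finite intersection; when $F=\varnothing$ it is the whole space.
\item If $U\subseteq X^+$ with $X^+\setminus U=F$ finite, then $K\subseteq U$ iff $K\subseteq X^+$ and $K\cap F=\varnothing$. Hence $\Box U=\Box X^+\cap\bigcap_{x\in F}\mathcal U_x$.
\end{itemize}

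Every basic open set is therefore a finite intersection of members of the family, and every member of the family is open. The two topologies coincide, and the family is a subbase. There is no real obstacle here; the only points needing care are the nonemptiness convention, which gives $\Box\varnothing=\varnothing$, and remembering that $X^+$ itself is open.
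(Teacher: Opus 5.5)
Your proof is correct and follows essentially the same route as the paper's. Both write each basic open $\Box U$ as $\bigcap_{x\in F}\mathcal U_x$ or $\Box X^+\cap\bigcap_{x\in F}\mathcal U_x$ according to the type of $U$, and note that $\Box\varnothing=\varnothing$; you additionally make explicit that each $\mathcal U_x=\Box(X\setminus\{x\})$ and $\Box X^+$ are themselves Vietoris open, which the paper leaves implicit.
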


\begin{proof}
As every nonempty open subset of $X$ is of the form $X\setminus F$ or 
$X^+\setminus F$, where $F\subseteq X$ is finite, we know that 
\[
\square(X\setminus F)=\bigcap_{x\in F}\mathcal U_x,
\qquad
\square(X^+\setminus F)=\Box X^+ \cap\bigcap_{x\in F}\mathcal U_x.
\]
Moreover $\square\varnothing=\varnothing$, and the claim follows.
\end{proof}

For $K\in \mathcal{Q}^{\ast}(X) = 2^X\setminus\{\varnothing\}$, we define
\[
K^+:= K\cap X^+,
\qquad
K^-:= K\cap X^{-},
\]
and for $x\in X$, we let
\[
\mathcal F_x:=\mathcal{Q}^{\ast}(X)\setminus \mathcal U_x=\{K\in \mathcal{Q}^{\ast}(X):x\in K\},
\quad
\mathcal N:=\mathcal{Q}^{\ast}(X)\setminus \Box X^+=\{K\in \mathcal{Q}^{\ast}(X):K^-\neq\varnothing\}.
\]

\begin{lemma}[A specialized Alexander criterion]\label{lem:alexander}
For a subset $\mathcal K\subseteq \mathcal{Q}^{\ast}(X)$, the following are equivalent:
\begin{itemize}
    \item[(i)] $\mathcal K$ is a compact subset of $\mathcal{Q}^{\ast}_v(X)$;
    \item[(ii)] the following two conditions hold:
    \begin{itemize}
        \item[(K0)] whenever $S\subseteq X$ has the property that for every finite $F\subseteq S$ there exists $K_F\in\mathcal K$ with $F\subseteq K_F$, then there exists $K\in\mathcal K$ with $S\subseteq K$;
        \item[(K1)] whenever $S\subseteq X$ has the property that for every finite $F\subseteq S$ there exists $K_F\in\mathcal K$ such that $F\subseteq K_F$ and $K_F^-\neq\varnothing$, then there exists $K\in\mathcal K$ with $S\subseteq K$ and $K^-\neq\varnothing$.
    \end{itemize}
\end{itemize}
\end{lemma}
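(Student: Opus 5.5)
Our approach is the Alexander subbase theorem applied to the subbase of Lemma~\ref{lem:subbasis}. By Alexander, $\mathcal K$ is compact iff every cover of $\mathcal K$ by members of $\{\mathcal U_x:x\in X\}\cup\{\Box X^+\}$ has a finite subcover. We then translate covers into the language of subsets $S\subseteq X$. A subfamily of the subbase has the form $\{\mathcal U_x:x\in S\}$, possibly together with $\Box X^+$. A set $K$ fails to lie in $\bigcup_{x\in S}\mathcal U_x$ exactly when $S\subseteq K$, and fails to lie in $\Box X^+$ exactly when $K^-\neq\varnothing$.

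Hence covers are of two kinds. A family $\{\mathcal U_x:x\in S\}$ covers $\mathcal K$ iff no $K\in\mathcal K$ satisfies $S\subseteq K$. A family $\{\mathcal U_x:x\in S\}\cup\{\Box X^+\}$ covers $\mathcal K$ iff no $K\in\mathcal K$ satisfies both $S\subseteq K$ and $K^-\neq\varnothing$. The finite subcovers are described the same way, with $S$ replaced by a finite $F\subseteq S$. We may always add $\Box X^+$ to a finite subcover or drop it, and finite subsets of $S$ are all we need.

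(i)$\Rightarrow$(ii), in contrapositive form. Suppose (K0) fails for some $S$. Then $\{\mathcal U_x:x\in S\}$ covers $\mathcal K$, but no finite subfamily $\{\mathcal U_x:x\in F\}$ does, since by hypothesis some $K_F\in\mathcal K$ contains $F$. This contradicts compactness. Similarly, suppose (K1) fails for $S$. Then $\{\mathcal U_x:x\in S\}\cup\{\Box X^+\}$ covers $\mathcal K$. Any finite subfamily is contained in $\{\mathcal U_x:x\in F\}\cup\{\Box X^+\}$ for some finite $F\subseteq S$, and this misses $K_F$. Compactness is used directly here, so Alexander is not needed in this direction.

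(ii)$\Rightarrow$(i). By Alexander, it suffices to take a subbasic cover and produce a finite subcover.
\begin{itemize}
\item If the cover is $\{\mathcal U_x:x\in S\}$ and has no finite subcover, then every finite $F\subseteq S$ has some $K_F\in\mathcal K$ with $F\subseteq K_F$. By (K0) some $K\in\mathcal K$ contains $S$, so this $K$ is uncovered, a contradiction.
\item If the cover includes $\Box X^+$ and has no finite subcover, then for each finite $F\subseteq S$ the family $\{\mathcal U_x:x\in F\}\cup\{\Box X^+\}$ fails to cover. This yields $K_F\supseteq F$ with $K_F^-\neq\varnothing$. By (K1) there is an uncovered $K\in\mathcal K$, a contradiction.
\end{itemize}

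The main point of care is the bookkeeping: a finite subcover may or may not contain $\Box X^+$, and we reduce to the worst case where it does. One must also remember that the empty cover is the case $S=\varnothing$. There, (K0) with $F=\varnothing$ handles $\mathcal K=\varnothing$ trivially, since the hypothesis of (K0) fails when $\mathcal K$ is empty. No genuine obstacle remains beyond invoking the Alexander subbase lemma.
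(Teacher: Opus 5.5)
Your proposal is correct and is essentially the paper's argument. The paper applies Alexander's subbase theorem to the complementary closed subbase $\{\mathcal F_x:x\in X\}\cup\{\mathcal N\}$ and reads (K0) and (K1) off the finite intersection property, which is your open-cover bookkeeping with complements taken. Your direct proof of (i)$\Rightarrow$(ii) and your treatment of $S=\varnothing$ are also fine; the remark that one may ``drop'' $\Box X^+$ from a finite subcover is loosely worded but never used.
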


\begin{proof}
By Lemma~\ref{lem:subbasis}, the family $\{\mathcal F_x:x\in X\}\cup\{\mathcal N\}$ is a subbase of closed sets of $\mathcal{Q}^{\ast}_v(X)$. By Alexander's subbase theorem, $\mathcal K$ is compact if and only if every family of the form
\[
\{\mathcal K\cap \mathcal F_x:x\in S\}
\qquad\text{or}\qquad
\{\mathcal K\cap \mathcal N\}\cup\{\mathcal K\cap \mathcal F_x:x\in S\}
\]
with the finite intersection property has a nonempty intersection.

If the family with the finite intersection property  is of the first type, then the compactness of~$\mathcal K$ is
is equivalent to \textup{(K0)}. This is because in this case the finite intersection property reads as for each finite $F$,
\[
\bigcap_{x\in F}(\mathcal K\cap \mathcal F_x)=\left \{ K\in \mathcal{K}: K\in \bigcap_{x\in F} \mathcal F_x\right \}\neq\varnothing,\]
which is equivalent to the existence of a $K_F\in\mathcal K$ with $F\subseteq K_F$; the fact that the total intersection is nonempty means 
\[
\bigcap_{x\in S}(\mathcal K\cap \mathcal F_x)=\left \{ K\in \mathcal{K}: K\in \bigcap_{x\in S} \mathcal F_x\right \}\neq\varnothing,\]
which is equivalent to saying that there exists
$K\in\mathcal K$ such that $S\subseteq K$.

Similarly, if the family with the finite intersection property is of the second type, the corresponding nonempty intersection property is equivalent to \textup{(K1)}. 
\end{proof}

Now we construct a non-compact saturated subset $\mathcal C$ of $\mathcal{Q}^{\ast}_v(X)$  which can be written as an intersection of two compact saturated sets $\mathcal A$ and $\mathcal B$, and this means that $\mathcal{Q}^{\ast}_v(X)$ is not coherent. 

 Let
\[
O^-:=\{(2n+1)^-:n\in\mathbb N\},
\qquad
E^-:=\{(2n)^-:n\in\mathbb N\},
\]
and  we define
\[
\mathcal C:=\Bigl\{K\in \mathcal{Q}^{\ast}(X):K^+=\varnothing\ \text{or}\ K^-=\varnothing\ \text{or}\ (\forall i^+\in K^+)(\forall j^-\in K^-)\ i<j\Bigr\},
\]
\[
\mathcal A:=\mathcal C\cup\{K\in \mathcal{Q}^{\ast}(X):K\cap O^-=\varnothing\},
\qquad
\mathcal B:=\mathcal C\cup\{K\in \mathcal{Q}^{\ast}(X):K\cap E^-=\varnothing\}.
\]

\begin{proposition}\label{prop:A-B-compactsaturated-C-saturated}
The subsets $\mathcal A$ and $\mathcal B$ are compact saturated in $\mathcal{Q}^{\ast}_v(X)$ and $\mathcal C$ is saturated in~$\mathcal{Q}^{\ast}_v(X)$.
\end{proposition}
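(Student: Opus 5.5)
We first check saturation, and then obtain compactness of $\mathcal A$ and $\mathcal B$ from Lemma~\ref{lem:alexander}, checking (K0) and (K1) directly.

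\emph{Saturation.} The specialization order on $\mathcal{Q}^*_v(X)$ is reverse inclusion, and a subset of any space is saturated iff it is an upper set in the specialization order. So it suffices to show that $\mathcal C$, $\mathcal A$ and $\mathcal B$ are closed under passing to nonempty subsets.
\begin{itemize}
\item For $\mathcal C$ this holds because the defining condition is a universal statement about pairs $(i^+,j^-)$ with $i^+\in K^+$ and $j^-\in K^-$. Shrinking $K$ only removes such pairs, and it can only make $K^+$ or $K^-$ empty.
\item The conditions $K\cap O^-=\varnothing$ and $K\cap E^-=\varnothing$ are clearly preserved under subsets.
\item Unions of down-closed families are down-closed, so $\mathcal A$ and $\mathcal B$ are closed under nonempty subsets.
\end{itemize}

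\emph{Compactness via a finitary property.} By symmetry (swap $O^-$ and $E^-$) it is enough to treat $\mathcal A$. The key step, and the one requiring care, is the following claim: a nonempty $S\subseteq X$ belongs to $\mathcal A$ as soon as every nonempty finite $F\subseteq S$ does. A union of two finitary families need not be finitary, so this is not automatic. We argue by contrapositive. Suppose $S\notin\mathcal A$. Then $S\notin\mathcal C$, so there are $i^+,j^-\in S$ with $i\ge j$. Also $S\cap O^-\neq\varnothing$, so there is some $o\in S\cap O^-$. Put $F=\{i^+,j^-,o\}$. Then $F\notin\mathcal C$, witnessed by the pair $(i^+,j^-)$, and $F\cap O^-\neq\varnothing$. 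Hence $F\notin\mathcal A$, which proves the claim.

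\emph{Checking (K0) and (K1).} Since $\mathcal A$ is down-closed, the hypothesis of (K0) says exactly that every nonempty finite subset of $S$ lies in $\mathcal A$.
\begin{itemize}
\item For (K0): if $S\neq\varnothing$, the claim gives $S\in\mathcal A$, and we take $K=S$. If $S=\varnothing$, any member of $\mathcal A$ works, for instance $\{0^+\}$.
\item For (K1) when $S^-\neq\varnothing$: the hypothesis of (K1) implies that of (K0), so $S\in\mathcal A$ and $K=S$ has $K^-\neq\varnothing$.
\item For (K1) when $S\subseteq X^+$: take $K=S\cup\{0^-\}$. This set meets no point of $O^-$, so $K$ lies in the second part of $\mathcal A$, and $K^-\neq\varnothing$.
\item For $\mathcal B$ the argument is identical, adjoining $1^-$ instead of $0^-$.
\end{itemize}
Lemma~\ref{lem:alexander} then gives that $\mathcal A$ and $\mathcal B$ are compact.
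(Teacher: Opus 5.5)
Your proof is correct and follows the paper's route: saturation as down-closedness under inclusion, then (K0) and (K1) of Lemma~\ref{lem:alexander} via the three-point witness $\{i^+,j^-,o\}$. The differences are minor: you isolate the finite-character property of $\mathcal A$ once and reduce the case $S^-\neq\varnothing$ of (K1) to (K0), whereas the paper repeats the witness argument inside (K1). (Incidentally, your caveat that a union of two finitary families need not be finitary is unfounded for down-closed families: one can always take the union of the two finite witnesses, which is exactly your $F$. This does not affect the argument.)
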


\begin{proof}
The specialization order on $\mathcal{Q}^{\ast}_v(X)$ is reverse inclusion. Hence a subset of $\mathcal{Q}^{\ast}(X)$ is saturated if and only if it is downward closed with respect to set inclusion. The definitions above show immediately that $\mathcal A$, $\mathcal B$, and $\mathcal C$ are saturated.

We prove that $\mathcal A$ is compact, and the verification of compactness of $\mathcal B$ is analogous.

By Lemma~\ref{lem:alexander}, it suffices to verify \textup{(K0)} and \textup{(K1)} for $\mathcal K=\mathcal A$.

\noindent\emph{Verification of \textup{(K0)}.}
Let $S\subseteq X$, and assume that every finite $F\subseteq S$ is contained in some member of $\mathcal A$.
\begin{itemize}
    \item If $S\cap O^- = \varnothing$, then either $S=\varnothing$, in which case we may take
    $S\subseteq \{0^+\}\in \mathcal C  \subseteq \mathcal A$, or else $S\neq\varnothing$, in which case $S \subseteq{S}\in \mathcal A$ by definition.
    \item Assume now that $S$ contains some odd negative point, say $o^-\in S\cap O^-$. We claim that $S\in\mathcal C$. Suppose not. Then there exist $p^+\in S$ and $e^-\in S$ such that $e\le p$. Consider the finite set
    \[
    F:=\{o^-,e^-,p^+\}\subseteq S.
    \]
    By assumption we find some $K\in\mathcal A$ with $F\subseteq K$. Since $o^-\in K$, $K\cap O^- \not= \varnothing$. This set $K$ must be in $\mathcal C$. But $e^-\in K$ and $p^+\in K$ with $e\le p$, which contradicts the definition of $\mathcal C$. This contradiction shows that $S\subseteq{S}\in\mathcal C\subseteq\mathcal A$.
\end{itemize}

\noindent\emph{Verification of \textup{(K1)}.}
Let $S\subseteq X$, and assume that every finite $F\subseteq S$ is contained in some $K_F\in\mathcal A$ with $K_F^- \neq\varnothing$.
\begin{itemize}
    \item If $S\cap O^- = \varnothing$, then either $S^-\neq\varnothing$, in which case $S\in\mathcal A\cap\mathcal N$, or else $S\subseteq X^+$, in which case  $S \subseteq{S\cup\{0^-\}}\in\mathcal A\cap\mathcal N.$ Thus \textup{(K1)} holds in this case.
    \item Assume now that $S$ contains some odd negative point, say $o^-\in S\cap O^-$. We show that $S\in\mathcal C\cap\mathcal N$. That $S\in \mathcal N$ is obvious. To prove $S\in\mathcal C$, we assume that $S^+ \neq \varnothing$ and let $p^+\in S$ and $e^-\in S$ be arbitrary. Consider the finite set
    \[
    F:=\{o^-,e^-,p^+\}\subseteq S.
    \]
    By assumption on $S$, there exists $K\in\mathcal A\cap\mathcal N$ with $F\subseteq K$. Since $o^-\in K$, the set $K$ cannot belong to $\{L\in \mathcal{Q}^{\ast}(X):L\cap O^-=\varnothing\}$. Hence $K\in\mathcal C$, and therefore $p<e$. Since $p^+\in S$ and $e^-\in S$ were arbitrary, we conclude that every positive index occurring in $S$ is strictly smaller than every negative index occurring in $S$. Thus $S\in\mathcal C\cap\mathcal N$.
\end{itemize}

So by Lemma~\ref{lem:alexander} $\mathcal A$ is compact. The proof for the compactness of $\mathcal B$ is similar, with odd and even negatives interchanged.
\end{proof}

\begin{proposition}\label{prop:C-not-compact}
We have $\mathcal A\cap\mathcal B=\mathcal C$ and the subset $\mathcal C$ is not compact in $\mathcal{Q}^{\ast}_v(X)$.
\end{proposition}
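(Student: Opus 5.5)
The proof has two parts: the set identity $\mathcal A\cap\mathcal B=\mathcal C$, and a violation of Lemma~\ref{lem:alexander} for $\mathcal K=\mathcal C$.

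\emph{The identity.} Clearly $\mathcal C\subseteq\mathcal A\cap\mathcal B$. Conversely, let $K\in\mathcal A\cap\mathcal B$ with $K\notin\mathcal C$. Then $K\cap O^-=\varnothing$ and $K\cap E^-=\varnothing$. Since $X^-=O^-\cup E^-$, this gives $K^-=\varnothing$, which puts $K$ in $\mathcal C$, a contradiction. Hence $\mathcal A\cap\mathcal B=\mathcal C$.

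\emph{Non-compactness.} By Lemma~\ref{lem:alexander} it suffices to find $S\subseteq X$ that violates either (K0) or (K1) for $\mathcal K=\mathcal C$. Take $S=X^+\cup\{j^-\}$ for a fixed $j$, say $S=X^+\cup\{0^-\}$ (any fixed $j$ works).

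We first check that $S$ satisfies the hypothesis of (K1). Let $F\subseteq S$ be finite, and let $m$ exceed every positive index occurring in $F$. Put $K_F:=F^+\cup\{m^-\}$. Then $K_F^-\neq\varnothing$, and every $i^+\in K_F^+$ has $i<m$, so $K_F\in\mathcal C$. However, $K_F$ need not contain $F$, since $F$ may contain $0^-$.

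To fix this, drop the fixed negative point and use $S=X^+$ itself. For each finite $F\subseteq X^+$, the set $K_F=F\cup\{m^-\}$ with $m$ larger than every index in $F$ lies in $\mathcal C$, contains $F$, and satisfies $K_F^-\neq\varnothing$. So the hypothesis of (K1) holds for $S=X^+$.

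It remains to see that the conclusion of (K1) fails. Suppose some $K\in\mathcal C$ satisfies $X^+\subseteq K$ and $K^-\neq\varnothing$. Pick $j^-\in K^-$. Then $K^+=X^+$ contains $j^+$, and $j\not<j$, so $K$ violates the defining condition of $\mathcal C$. This contradiction shows (K1) fails, and hence $\mathcal C$ is not compact by Lemma~\ref{lem:alexander}.

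The only delicate point is choosing $S$. It must include enough positives to block every possible negative point, while each finite piece can still be compensated by adding a large negative. Once $S=X^+$ is chosen, the remaining verifications are routine.

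Combined with Proposition~\ref{prop:A-B-compactsaturated-C-saturated}, this exhibits two compact saturated sets $\mathcal A,\mathcal B$ whose intersection is not compact. So $\mathcal{Q}^*_v(X)$ is not coherent, even though $X$ is coherent by Proposition~\ref{prop:X-coherent}.
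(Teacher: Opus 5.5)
Your proof is correct and is essentially the paper's argument in dual form. The failure of (K1) for $S=X^+$ says exactly that the closed sets $\mathcal N$ and $\mathcal F_{n^+}$ ($n\in\mathbb N$) have the finite intersection property on $\mathcal C$ but empty intersection there, and their complements form the paper's open cover $\{\Box X^+\}\cup\{\mathcal U_{n^+}:n\in\mathbb N\}$; your witnesses $F\cup\{m^-\}$ play the role of the paper's $K_N$, and only the easy direction of Lemma~\ref{lem:alexander} is needed. In a final write-up, delete the abandoned attempt with $S=X^+\cup\{0^-\}$: it genuinely fails the hypothesis of (K1), because no member of $\mathcal C$ contains both $0^-$ and a positive point.
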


\begin{proof}
That $\mathcal A\cap\mathcal B=\mathcal C$ is immediate. We show that $\mathcal{C}$ is not compact. To this end, recall that $\mathcal U_x:=\{K\in \mathcal{Q}^{\ast}(X):x\notin K\}$, and we
%
consider the family of open subsets 
\[
\{\Box X^+, U_{n^+}:n\in\mathbb N\}.
\]
We claim that it covers $\mathcal C$. Let $K\in\mathcal C$.

If $K^-=\varnothing$, then $K\subseteq X^+$, so $K\in \Box X^+$.

If $K^-\neq\varnothing$, let $m:= \min \{n: n^- \in K^-\}$. By the defining property of $\mathcal C$, every $i^+\in K^+$ satisfies $i<m$. Hence $m^+ \notin K^+$, i.e., $m^+\notin K$. Therefore $K\in U_{m^+}$.

Thus $\mathcal C\subseteq \Box X^+ \cup\bigcup_{n\in\mathbb N}U_{n^+}$.

Now let
\[
\{ \Box X^+,\ U_{0^+},\dots,U_{N^+}\}
\]
be any finite subfamily. Set
\[
K_N:=\{0^+,1^+,\dots,N^+,(N+1)^-\}.
\]
Then $K_N\in\mathcal C$, because every positive index in $K_N$ is strictly smaller than the unique negative index $N+1$. However, $K_N\notin \Box X^+$, and $K_N\notin U_{i^+}$ for each $0\le i\le N$ because $i^+\in K_N$. Hence this finite subfamily, and likewise any other finite subfamily, fails to cover $\mathcal C$. So $\mathcal C$ is not compact.
\end{proof}

\begin{theorem}\label{thm:counterexample}
There exists a coherent topological space $X$ such that $\mathcal{Q}^{\ast}_v(X)$ is not coherent.
In particular, the weakly Hausdorff assumption in Theorem~\ref{thm:coherence-of-Qv} cannot be dropped.
\end{theorem}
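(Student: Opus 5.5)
The theorem is an assembly of the results of this section, all about the fixed space $X=X^+\sqcup X^-$ with topology $\tau$. By Proposition~\ref{prop:X-coherent}, $X$ is coherent. It remains to show that $\mathcal{Q}^{\ast}_v(X)$ is not coherent. For that we need two compact saturated subsets of $\mathcal{Q}^{\ast}_v(X)$ whose intersection is not compact, and the candidates are the sets $\mathcal A$ and $\mathcal B$ defined above.

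The steps, in order, are as follows.
\begin{enumerate}
\item Proposition~\ref{prop:A-B-compactsaturated-C-saturated} shows that $\mathcal A$ and $\mathcal B$ are compact saturated in $\mathcal{Q}^{\ast}_v(X)$. That proof goes through the specialized Alexander criterion, Lemma~\ref{lem:alexander}, which in turn rests on the subbase description in Lemma~\ref{lem:subbasis}.
\item Proposition~\ref{prop:C-not-compact} shows that $\mathcal A\cap\mathcal B=\mathcal C$ and that $\mathcal C$ is not compact. The witness is the open cover $\{\Box X^+\}\cup\{\mathcal U_{n^+}:n\in\mathbb N\}$, which has no finite subcover: the sets $K_N=\{0^+,\dots,N^+,(N+1)^-\}$ lie in $\mathcal C$ but escape every finite subfamily.
\end{enumerate}
Together these say that the intersection of the two compact saturated sets $\mathcal A,\mathcal B$ fails to be compact, so $\mathcal{Q}^{\ast}_v(X)$ is not coherent. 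By Remark~\ref{remark-adding-top} the same then holds for $\mathcal{Q}_v(X)$.

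For the ``in particular'' clause, suppose the weak Hausdorff hypothesis could be dropped from Theorem~\ref{thm:coherence-of-Qv}. The implication $1\Rightarrow 3$ would then read ``$X$ coherent implies $\mathcal{Q}^*_v(X)$ coherent'', and our $X$ contradicts this. This is consistent with Proposition~\ref{prop:not-weh}, which shows that $X$ is indeed not weakly Hausdorff, so there is no conflict with Theorem~\ref{thm:coherence-of-Qv} itself.

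All the real work has already been done in the preceding propositions. The only substantive step is the compactness of $\mathcal A$ and $\mathcal B$, and in particular the (K1) verification, which is handled in Proposition~\ref{prop:A-B-compactsaturated-C-saturated}. The remaining point to check is the identity $\mathcal A\cap\mathcal B=\mathcal C$. If $K\in\mathcal A\cap\mathcal B$ but $K\notin\mathcal C$, then $K\cap O^-=\varnothing$ and $K\cap E^-=\varnothing$. Hence $K^-=\varnothing$, which puts $K$ in $\mathcal C$, a contradiction. The reverse inclusion is immediate from the definitions.
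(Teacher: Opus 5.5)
Your proposal is correct and matches the paper's proof: you take coherence of $X$ from Proposition~\ref{prop:X-coherent}, compactness of $\mathcal A$ and $\mathcal B$ from Proposition~\ref{prop:A-B-compactsaturated-C-saturated}, and non-compactness of $\mathcal C=\mathcal A\cap\mathcal B$ from Proposition~\ref{prop:C-not-compact}. Your explicit check of $\mathcal A\cap\mathcal B\subseteq\mathcal C$ via $O^-\cup E^-=X^-$, and your remarks on $\mathcal{Q}_v(X)$ and the ``in particular'' clause, are correct and only spell out what the paper leaves implicit.
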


\begin{proof}
By Proposition~\ref{prop:X-coherent}, the space $X$ introduced in this section is coherent. By Proposition~\ref{prop:A-B-compactsaturated-C-saturated}, the subsets $\mathcal A$ and $\mathcal B$ are compact saturated in $\mathcal{Q}^{\ast}_v(X)$. Their intersection $\mathcal C$ is not compact by Proposition~\ref{prop:C-not-compact}. Hence, $\mathcal{Q}^{\ast}_v(X)$ is not coherent.
\end{proof}

\section{A Scott space counterexample}
In this section, 
we give an example to show that the weak Hausdorffness assumption on $X$ in Theorem~\ref{thm:coherence-of-Qv} cannot be removed, even $X$ is assumed as a Scott space, namely a dcpo with its Scott topology. We recall that a dcpo is a poset in which every directed subset has a supremum, and the Scott topology on a dcpo consists of upper subsets that are also inaccessible by suprema of directed subsets. For a dcpo $P$, we use $\Sigma P$ to denote the associated Scott space obtained by equipping $P$ with its Scott topology. 
For detailed definitions of dcpo's and Scott topology, the reader is referred to \cite{gierz03}.

\begin{example}\label{ex:scott-counterexample}
Let
\[
M^+=\{p_n:n\in\mathbb N\},\qquad M^-=\{q_n:n\in\mathbb N\},\qquad M=M^+\cup M^-.
\]
Add two families
\[
A=\{a_{i,k}:i,k\in\mathbb N\},\qquad B=\{b_{i,k}:i,k\in\mathbb N\}.
\]
Let $P=M\cup A\cup B$, ordered by the reflexive-transitive closure of
\[
a_{i,k}\leq a_{i,\ell}\leq p_i,\qquad b_{i,k}\leq b_{i,\ell}\leq q_i\quad(k\leq \ell),
\]
\[
a_{i,k}\leq p_m\quad(m\geq k),\qquad b_{i,k}\leq p_m,q_m\quad(m\geq k).
\]
The order on $P$ can be depicted as in Figure~2. Then $P$ is a dcpo. The Scott space $\Sigma P$ is coherent, and $\mathcal{Q}_v^*(\Sigma P)$ is not coherent.

\begin{figure}[H]
\centering
\begin{tikzpicture}[x=1cm,y=1cm,line cap=round,line join=round, scale=0.8]

  \def\figlw{0.8pt}
  \def\ytop{6.2}
  \def\yone{3.9}
  \def\ytwo{2.4}
  \def\ythree{0.9}
  \def\r{0.28}

  \def\xpA{0.0}
  \def\xpB{1.8}
  \def\xpC{3.6}

  \def\xqA{8.6}
  \def\xqB{10.4}
  \def\xqC{12.2}

  \def\Lleft{-0.55}
  \def\Lright{5.2}

  \def\Rleft{7.95}
  \def\Rright{13.95}

  \tikzset{
    edge/.style={line width=\figlw},
    guide/.style={line width=\figlw,dashed},
    op/.style={circle,draw=black,fill=white,minimum size=5.5pt,inner sep=0pt,line width=\figlw},
    every node/.style={font=\small}
  }

  \newcommand{\opoint}[2]{\node[op] at (#1,#2) {};}

  \newcommand{\band}[4]{%
    \draw[edge] ($(#1,#2)+(0,#4)$) arc[start angle=90,end angle=270,radius=#4];
    \draw[edge] ($(#1,#2)+(0,#4)$) -- (#3,{#2+#4});
    \draw[edge] ($(#1,#2)+(0,-#4)$) -- (#3,{#2-#4});
    \draw[guide] (#3,{#2+#4}) -- ++(1.1,0);
    \draw[guide] (#3,{#2-#4}) -- ++(1.1,0);
  }

  \draw[guide] (4.9,\ytop) -- (7.0,\ytop);
  \draw[guide] (12.95,\ytop) -- (14.7,\ytop);

  \band{\Lleft}{\yone}{\Lright}{\r}
  \band{\Lleft}{\ytwo}{\Lright}{\r}
  \band{\Lleft}{\ythree}{\Lright}{\r}

  \band{\Rleft}{\yone}{\Rright}{\r}
  \band{\Rleft}{\ytwo}{\Rright}{\r}
  \band{\Rleft}{\ythree}{\Rright}{\r}

  \draw[guide] (\xpA,\yone) -- (\xpA,\ytop);
  \draw[guide] (\xpB,\yone) -- (\xpB,\ytop);
  \draw[guide] (\xpC,\yone) -- (\xpC,\ytop);

  \draw[edge] (\xpA,\ythree) -- (\xpA,\yone);
  \draw[edge] (\xpB,\ythree) -- (\xpB,\yone);
  \draw[edge] (\xpC,\ythree) -- (\xpC,\yone);

  \draw[guide] (\xqA,\yone) -- (\xqA,\ytop);
  \draw[guide] (\xqB,\yone) -- (\xqB,\ytop);
  \draw[guide] (\xqC,\yone) -- (\xqC,\ytop);

  \draw[edge] (\xqA,\ythree) -- (\xqA,\yone);
  \draw[edge] (\xqB,\ythree) -- (\xqB,\yone);
  \draw[edge] (\xqC,\ythree) -- (\xqC,\yone);

  \draw[edge] (\xpA,\ytop) -- ({\xpA+0.72},{\ythree+\r});
  \draw[edge] (\xpB,\ytop) -- ({\xpB+0.58},{\ytwo+\r});
  \draw[edge] (\xpC,\ytop) -- ({\xpC+0.42},{\yone+\r});

  \draw[edge] (\xqA,\ytop) -- ({\xqA+0.72},{\ythree+\r});
  \draw[edge] (\xqB,\ytop) -- ({\xqB+0.58},{\ytwo+\r});
  \draw[edge] (\xqC,\ytop) -- ({\xqC+0.42},{\yone+\r});

  \draw[edge] (\xpA,\ytop) -- ({\Rleft+0.95},{\ythree+\r});
  \draw[edge] (\xpB,\ytop) -- (\Rleft,{\ytwo+\r});
  \draw[edge] (\xpC,\ytop) -- (\Rleft,{\yone+\r});

  \node[above] at (\xpA,\ytop) {$p_1$};
  \node[above] at (\xpB,\ytop) {$p_2$};
  \node[above] at (\xpC,\ytop) {$p_3$};

  \node[above] at (\xqA,\ytop) {$q_1$};
  \node[above] at (\xqB,\ytop) {$q_2$};
  \node[above] at (\xqC,\ytop) {$q_3$};

  \opoint{\xpA}{\ytop}
  \opoint{\xpB}{\ytop}
  \opoint{\xpC}{\ytop}

  \opoint{\xqA}{\ytop}
  \opoint{\xqB}{\ytop}
  \opoint{\xqC}{\ytop}

  \foreach \x in {\xpA,\xpB,\xpC}{
    \opoint{\x}{\yone}
    \opoint{\x}{\ytwo}
    \opoint{\x}{\ythree}
  }

  \foreach \x in {\xqA,\xqB,\xqC}{
    \opoint{\x}{\yone}
    \opoint{\x}{\ytwo}
    \opoint{\x}{\ythree}
  }

\end{tikzpicture}
\\Figure 2: $P$ in Example~\ref{ex:scott-counterexample}
\end{figure}
\end{example}
\begin{proof}
    Either a directed set in $P$ has a largest element, or it is cofinal in one of the chains $(a_{i,k})_k$ or $(b_{i,k})_k$, whose suprema are $p_i$ or $q_i$, respectively. Hence $P$ is a dcpo. 
    
    To see that $\Sigma P$ is coherent, we first realize that the set $M$ of maximal points of $P$, in the induced Scott topology, is a homeomorphic copy of the space $X$ constructed in Section~4, along the canonical homeomorphism $\eta:X\to M$ by $\eta(n^+)=p_n$ and $\eta(n^-)=q_n$.
    
    We use $[M]^{<\omega}$ to denote the set of finite subsets of $M$.
    Indeed, if a Scott open set $U$ contains $p_i$, then $p_i=\bigvee_k a_{i,k}$, so $a_{i,k}\in U$ for some $k$, and hence $p_m\in U$ for all $m\geq k$. Similarly, if $q_i\in U$, then $q_i=\bigvee_k b_{i,k}$, so $p_m,q_m\in U$ for all sufficiently large $m$. Conversely, for finite $F\subseteq M$ and finite $G\subseteq M^+$, the sets $P\setminus\da F$ and $(A\cup M^+)\setminus\da G$ are Scott open, and they intersect $M$ at $M\setminus F$ and $M^+\setminus G$, respectively. Thus the subspace topology on $M$ is $\{\emptyset\}\cup\{M\setminus F:F\in[M]^{<\omega}\}\cup\{M^+\setminus G:G\in[M^+]^{<\omega}\}$, which shows that $M$ is indeed a copy of~$X$. 

    This allows us to characterize all compact saturated subsets of $\Sigma P$, with a similar analysis as in \cite[Example 2.6.1]{jia2018meet}. They are of the form $\ua F \cup S$, where $F$ is a finite subset of $A \cup B$ and $S$ is an arbitrary subset of $M$. The intersection of any two subsets of this form is again of the same form, and hence $\Sigma P$ is coherent. 

    It remains to prove that $\mathcal{Q}_v^*(\Sigma P)$ is not coherent. Put $\mathcal{R}\defeq\{K\in\mathcal{Q}_v^*(\Sigma P):K\subseteq M\}$. The subspace $\mathcal{R}$ is canonically homeomorphic to $\mathcal{Q}_v^*(M)$ and a subset of $\mathcal{R}$ is compact saturated if and only if it is compact saturated in the ambient space $\mathcal{Q}_v^*(\Sigma P)$. Let $\mathcal{A},\mathcal{B},\mathcal{C}\subseteq\mathcal{Q}_v^*(X)$ be the three families used in Theorem~\ref{thm:counterexample}, where $\mathcal{A}$ and $\mathcal{B}$ are compact saturated, $\mathcal{A}\cap\mathcal{B}=\mathcal{C}$ is not compact. The homeomorphism $\eta$ induces a homeomorphism $\widehat{\eta}:\mathcal{Q}_v^*(X)\to\mathcal{R}$, given by $\widehat{\eta}(K)=\eta[K]$. Therefore, $\widehat{\eta}[\mathcal{A}]$ and $\widehat{\eta}[\mathcal{B}]$ are compact saturated subsets of $\mathcal{Q}_v^*(\Sigma P)$, while $\widehat{\eta}[\mathcal{A}]\cap\widehat{\eta}[\mathcal{B}]=\widehat{\eta}[\mathcal{C}]$ is not compact. Hence $\mathcal{Q}_v^*(\Sigma P)$ is not coherent.
\end{proof}

\section{Closing remarks}

It is worth emphasizing that the counterexample constructed in Section~4 has very strong classical properties, as witnessed by Proposition~\ref{prop:X-coherent}. Thus, even within the class of compact, coherent,  $T_1$, locally compact and second-countable
spaces, the Smyth powerspace $\mathcal{Q}^*_v(X)$ need not preserve coherence. In other words, among these properties, the essential missing condition to make  $\mathcal{Q}^*_v(X)$ coherent is precisely weak Hausdorffness.
This shows that weak Hausdorffness is not merely a technical assumption in Theorem~\ref{thm:coherence-of-Qv}, but is relevant to the preservation of coherence by the Smyth powerspace construction. 
A natural direction for further work is to characterize those coherent spaces~$X$ for which $\mathcal{Q}^*_v(X)$ is coherent, and to determine whether weak Hausdorffness can be replaced by other assumptions.  

We conclude this paper with listing the following, where we use "$\implies$" to mean implication. 
\begin{enumerate}
    \item $\mathcal{Q}^*_v(X)$ is coherent $\implies$ $X$ is coherent (Proposition~\ref{prop:Qcoh/wh-Xcoh/wh}, item 1);
    \item $\mathcal{Q}^*_v(X)$ is weakly Hausdorff $\implies$ $X$ is weakly Hausdorff (Proposition~\ref{prop:Qcoh/wh-Xcoh/wh}, item 2);
    \item $X$ is weakly Hausdorff $\not \Rightarrow$ $\mathcal{Q}^*_v(X)$ is weakly Hausdorff (Example~\ref{ex:X-weakly-Hausdorff-Q(X)not});
    \item $X$ is coherent $\not \Rightarrow$ $\mathcal{Q}^*_v(X)$ is coherent (Theorem~\ref{thm:counterexample});
    \item $X$ is coherent and weakly Hausdorff $\iff$ $\mathcal{Q}^*_v(X)$ is coherent and weakly Hausdorff (Theorem~\ref{thm:coherence-of-Qv}).
\end{enumerate}

\section{Acknowledgment}

We would like to thank Jean Goubault-Larrecq for useful discussion, who already knew the usefulness of weak Hausdorffness in analyzing coherence of Smyth powerspaces of topological spaces, and for pointing the Example~4.1 in \cite{goubault24a} to us. 

\bibliographystyle{plain}

\end{document}